\newtheorem{ex}{Example}[section]
\newtheorem{lem}{Lemma}[section]
\newtheorem{df}{Definition}[section]
\newtheorem{thm}{Theorem}[section]
\newcommand{\go}[1]{\mathfrak{#1}}
\def\binom#1#2{{#1}\choose{#2}}
\newcommand{\R}{{\rm I}\kern-0.18em{\rm R}}
\newcommand{\1}{{\rm 1}\kern-0.25em{\rm I}}
\newcommand{\E}{{\rm I}\kern-0.18em{\rm E}}
\newcommand{\p}{{\rm I}\kern-0.18em{\rm P}}
\def\@fnsymbol#1{\ensuremath{\ifcase#1\or a\or b\or c\or d\or \e\or f\or *\dagger 	\or \ddagger\ddagger \else\@ctrerr\fi}}
\title{A new definition of random sets}
\author{Vesna Gotovac\footnote{University of Split}, Kate\v{r}ina Helisova \footnote{Czech Technical University in Prague},\\ Lev B. Klebanov\footnote{Charles University} and Irina V. Volchenkova\footnotemark[2]\, \footnote{The order of authors is alphabetical and has no other sense}}
\date{}
\begin{document}
\maketitle

\begin{abstract} 
A new definition of random sets is proposed. It is based on the distance in measurable space and uses negative definite kernels for continuation from initial space to that of random sets. This approach has no connection to Hausdorff distance between sets. 

\noindent 
{\bf Key words}: random sets; measurable space; negative definite kernels; Hilbert space isometries.
\end{abstract}

\section{Introduction}\label{sec1} 
\setcounter{equation}{0}

Mathematical theory of random sets is very popular nowadays. It has also many applications in physics, biology, medicine and other field of science. Let us mention the books \cite{Ma} and \cite{Mo} as providing main notions and basic result in the theory. 

Main approach to definition of random set is as following. One has a metric space. On the set of its compact subsets implemented with Hausdorff metric are defined Borel probability measures, which are the ``distributions of random sets". To have more rich structure one considers compact convex subsets of Euclidean space $\R^d$ with Minkowski sum as operation.

In this paper we would like to define random events, that are random elements of a Boolean algebra (B.A.) with a finite (or just probabilistic) measure on it. Of course, such events may be considered as subsets from a realization of B.A. as clopen subsets algebra of totally disconnected compact space (see, for example \cite{Vl}). The possibility of such realization is guaranteed by well-known Stone theorem. However, corresponding totally disconnected compact space is not metrizable in typical situations. Therefore, we cannot use standard approach based on Hausdorff distance. 
Our approach starts from normed B.A., i.e. complete B.A. endowed with positive finite measure\footnote{Measure is a non-negative countable additive function on B.A.}. Without loss of generality we can suppose that this is probabilistic measure, or, shortly, probability. It allows one to introduce a distance on B.A., which will be used to define class of Borel subsets of B.A. (see, for example, \cite{Vl}).

Assume we are given a measure space $\{\Omega, {\mathcal E}, m\}$. To this space there corresponds the ``metric structure", i.e., the Boolean algebra $\tilde{\mathcal E}$ resulting
from factorization of initial $\sigma$-algebra  by the ideal of negligible sets. This
metric structure is a complete B.A. endowed with the measure $m$. We will apply the term "metric structure" to the normed B.A. $\{\tilde{\mathcal E}, \tilde{m}\}$ as well. The inverse is also true. Namely, {\it for each normed B.A. $\{{\mathcal X},\mu\}$ there exists a measure space $\{\Omega, {\mathcal E}, m\}$ such that the normed B.A.s $\{{\mathcal X},\mu\}$ and $\{\tilde{\mathcal E}, \tilde{m}\}$ are isomorphic}. In other words, each normed B.A. is isomorphic to some metric structure (see \cite{Vl}). According to this result we will further consider normed B.A.s identifying them with isomorphic metric structures.

Let us now consider complete normed B.A. $\{{\mathcal X},\mu\}$ and let measure space $\{\Omega, {\mathcal E}, m\}$ be such that $\{\tilde{\mathcal E}, \tilde{m}\}$ is isomorphic to $\{{\mathcal X},\mu\}$. Define
\begin{equation}\label{eq1} 
{\mathcal L}(A,B) = \mu (A \Delta B), \; \; A,B \in {\mathcal X}.
\end{equation}

\begin{df}\label{de0}
Let $L(x,y)=L(y,x)$ be a real function given on an abstract set $\mathcal{R}$. We say $L$ is negative definite kernel if for any positive integer $n$, any points $x_1, \ldots ,x_n$ from $\mathcal{R}$ and any real constants $c_1, \ldots ,c_n$ under condition $\sum_{k=1}^{n}c_k =0$ the following inequality holds
\[ \sum_{i=1}^{n}\sum_{j=1}^{n}L(x_i,x_j)c_ic_j \leq 0.\]
\end{df}

\begin{lem}
The function $\mathcal L$ given by (\ref{eq1}) is a negative definite kernel on $\mathcal X^2$.
\end{lem}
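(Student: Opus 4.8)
The plan is to realize $\mathcal L$ as the squared distance of an isometric embedding of $\mathcal X$ into a Hilbert space, since squared Hilbert--space distances are the model examples of negative definite kernels. Concretely, I would invoke the measure space $\{\Omega,{\mathcal E},m\}$ introduced above whose metric structure $\{\tilde{\mathcal E},\tilde m\}$ is isomorphic to $\{{\mathcal X},\mu\}$, and send each $A\in{\mathcal X}$ to the indicator function $\mathbf 1_A$ of a representative set from ${\mathcal E}$. Since indicators are determined modulo $m$-null sets, this yields a well-defined element of $L^2(\Omega,{\mathcal E},m)$, and the whole argument then reduces to linear algebra in that Hilbert space.

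The first step is the elementary pointwise identity $(\mathbf 1_A-\mathbf 1_B)^2=\mathbf 1_{A\Delta B}$, which holds because indicators take values in $\{0,1\}$. Integrating against $m$ and using the isomorphism $\{{\mathcal X},\mu\}\cong\{\tilde{\mathcal E},\tilde m\}$ gives
\[
{\mathcal L}(A,B)=\mu(A\Delta B)=\int_\Omega (\mathbf 1_A-\mathbf 1_B)^2\,dm=\|\mathbf 1_A-\mathbf 1_B\|^2_{L^2(m)}.
\]
In particular ${\mathcal L}(A,B)={\mathcal L}(B,A)$, so the symmetry required in Definition \ref{de0} is automatic, and $\mathcal L$ is a real function on ${\mathcal X}^2$.

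The second step is the standard expansion. Fix a positive integer $n$, points $A_1,\dots,A_n\in{\mathcal X}$ and real constants $c_1,\dots,c_n$ with $\sum_{k=1}^n c_k=0$. Writing $f_k=\mathbf 1_{A_k}\in L^2(m)$ and expanding the squared norm,
\[
\sum_{i=1}^n\sum_{j=1}^n {\mathcal L}(A_i,A_j)c_ic_j=\sum_{i,j}\bigl(\|f_i\|^2+\|f_j\|^2-2\langle f_i,f_j\rangle\bigr)c_ic_j .
\]
The first two double sums vanish because of the constraint: for instance $\sum_{i,j}\|f_i\|^2 c_ic_j=\bigl(\sum_i\|f_i\|^2 c_i\bigr)\bigl(\sum_j c_j\bigr)=0$, and symmetrically for the term with $\|f_j\|^2$. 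Hence the left-hand side equals $-2\sum_{i,j}\langle f_i,f_j\rangle c_ic_j=-2\bigl\|\sum_{i=1}^n c_i f_i\bigr\|^2\le 0$, which is precisely the inequality in Definition \ref{de0}.

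I do not anticipate a genuine obstacle. The only point that deserves a line of care is the passage from the abstract Boolean algebra to honest sets: it is the identification of $\{{\mathcal X},\mu\}$ with the metric structure $\{\tilde{\mathcal E},\tilde m\}$ that licenses replacing the abstract symmetric difference $A\Delta B$ by a set-theoretic one and embedding into $L^2(m)$, after which negative definiteness is simply inherited from the Hilbert space norm. (Alternatively, one could cite Schoenberg's characterization of negative definite kernels via isometric Hilbert embeddings, but the direct computation above is self-contained and shorter.)
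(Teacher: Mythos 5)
Your proof is correct and follows essentially the same route as the paper: both express $\mu(A\Delta B)$ as $\int_\Omega(\1_A+\1_B-2\1_A\1_B)\,dm$, use $\sum_k c_k=0$ to kill the first two terms of the expanded double sum, and reduce the quadratic form to $-2\bigl\|\sum_k c_k\1_{A_k}\bigr\|^2_{L^2(m)}\le 0$. The Hilbert-space phrasing is a cosmetic repackaging of the paper's direct integral computation.
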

\begin{proof} 
We have
\[{\mathcal L}(A,B) = \mu (A \Delta B) = \int_{\Omega} \bigl(\1_A(x) + \1_B(x) -2 \1_A(x)\1_B(x)\bigr)dm. \]
Suppose that $c_1, \ldots ,c_n$ are some constant under condition $\sum_{k=1}^{n}c_k =0$, and $A_1, \ldots ,A_n$ are elements of $\mathcal X$. Then
\[\sum_{j=1}^{n}\sum_{k=1}^{n}{\mathcal L}(A_j,A_k)c_j c_k = \] \[=\sum_{j=1}^{n}\sum_{k=1}^{n}\int_{\Omega} \bigl(\1_{A_j}(x) + \1_{A_k}(x) -2 \1_{A_j}(x)\1_{A_k}(x)\bigr)dm\cdot c_jc_k =\]
\[ = -2\int_{\Omega} \Bigl( \sum_{k=1}^{n}\1_{A_k}c_k\Bigr)^2 dm \leq 0. \]
\end{proof}

It is easy to see that 
\begin{eqnarray}\label{eq2}
{\mathcal L}(A,B) = \mu (A \Delta B) = \int_{\Omega} \bigl(\1_A(x) - \1_B(x) \bigr)^2 dm \\\nonumber =\int_{\Omega} \bigl|\1_A(x) - \1_B(x) \bigr|^{\alpha} dm
\end{eqnarray}
for any $\alpha>0$. Therefore, $\Bigl(\mathcal{L}(A,B)\Bigr)^{\min(1,1/\alpha)}$ is a metric on the B.A. $\mathcal X$. For $\alpha=2$ the distance is equivalent to $L^2(\Omega,m)$ -- distance. B.A. $\mathcal{X}$ with this distance possesses an isometry on a subset of a Hilbert space. In this paper we concentrate ourselves on $L^2$ case. 

Let us mention that there are many topologies introduced in B.A.s. The most popular between them is order topology ($(o)$-topology). It is known that {\it the topology of the metric space $\{\mathcal{X},{\mathcal L}\}$ coincides with the $(o)$-topology} (for the definition of $(o)$-topology and mentioned result see \cite{Vl}). Therefore, $(o)$-topology coincides with that generated by ${\mathcal L}^{1/2}$ and, consequently, with topology induced from Hilbert space. Further we do not use $(o)$-topology and, therefore, omit the details.  

Negative definite kernel $\mathcal L$ generates positive definite kernel $\mathcal K$ (see \cite{Kl}) as follows:
\begin{equation}\label{eq3}
{\mathcal K}(A,B) = \frac{1}{2} \Bigl({\mathcal L}(A,\Omega)+{\mathcal L}(\Omega,B) - {\mathcal L}(A,B)\Bigr) = \mu(A\cap B).
\end{equation}
The kernel $\mathcal K$ plays role of inner product in corresponding Hilbert space \cite{Kl}. However, it is obvious from representation
\[ {\mathcal K}(A,B) = \int_{\Omega} \1_A(x)\cdot \1_B(x) dm. \] 
This allows us to define some characteristics of elements of our B.A. using geometric properties of Hilbert space. For example, we may define norm of a set as $\|A\|={\mathcal K}(A,A)= \mu (A)$ and an angle $\alpha (A,B)$ between sets (elements of B.A.) $A$ and $B$ by setting
\[\cos \alpha (A,B) = \frac{\int_{\Omega} \1_A(x)\cdot \1_B(x) dm}{\Bigl(\int_{\Omega} \1_A^2(x)dm\Bigr)^{1/2}\Bigl(\int_{\Omega} \1_B^2(x)dm\Bigr)^{1/2}} =\frac{\mu(A\cap B)}{(\mu (A)\cdot \mu (B))^{1/2}}.\]
It is clear that\footnote{Let us reminde that the measure $\mu$ is strictly positive} 
\[ 0 \leq \cos \alpha (A,B) \leq 1,\] 
\[\cos \alpha (A,B) =0 \Longleftrightarrow A\cap B = \emptyset,\] 
\[ \cos \alpha (A,B) = 1 \Longleftrightarrow A=B. \]
Therefore, the sets with empty intersection may be considered as orthogonal. The main property of the measure $\mu$: $\mu(A\cup B) = \mu (A)+ \mu(B)$ for orthogonal $A$ and $B$ may be interpreted as Pythagorean theorem in Hilbert space. 

Let $A_1, \ldots ,A_n$ be complete system of events,i.e. $n$ elements of B.A. satisfying to the conditions $\bigcup_{j=1}^{n}A_j=\Omega$ and $A_i \cap A_j = \emptyset$ for $i \neq j$. Then $\sum_{j=1}^{n}\1_{A_j} = \1_{\Omega}=1$ and $\int_{\Omega}\1_{A_i}\cdot \1_{A_j}dm = 0$ for $i \neq j$, i.e. the functions $\1_{A_1}, \ldots, \1_{A_n}$ compose an orthogonal system. Inverse statement is not completely true. If some indicator-functions $\1_{A_1}, \ldots, \1_{A_n}$ compose an orthogonal system then $A_i \cap A_j = \emptyset$ for $i \neq j$, but, possible, $\bigcup_{j=1}^{n}A_j \neq\Omega$. 

Suppose now that indicator-functions $\1_{A_1}, \ldots, \1_{A_n}$ compose an orthogonal system, and $\1_A$ is indicator, corresponding to an event $A$. We can find the best approximation of $\1_A$ by linear combinations of $\1_{A_1}, \ldots, \1_{A_n}$ in our Hilbert space:
\[ \min_{a_1, \ldots ,a_n}\int_{\Omega}\Bigl(\1_A(x)-\sum_{j=1}^{n}a_j\1_{A_j}\Bigr)^2 dm .\] 
It is well-known (and easy to find) that optimal values of coefficients $a_j$ are
\begin{equation}\label{eqA}
 a_j^{*}=\frac{\int_{\Omega}\1_A(x)\cdot \1_{A_j}dm}{\int_{\Omega}\1_{A_j}^2dm}= \frac{\mu{(A \cap A_j)}}{\mu(A_j)}, \;\;\; j=1, \ldots ,n.
\end{equation}
Obviously, optimal coefficients $a_j^*$ are conditional probabilities of $A$ given $A_j$, and their interpretation is clear. However, how is it possible to give interpretation of 
$\sum_{j=1}^{n}a_j^*\cdot \1_{A_j}$? This sum is not indicator-function and, therefore, does not correspond to any event. Let us mention that $0 \leq a_j^* \leq 1$ and
$\sum_{j=1}^n a_j^* \leq 1$. If we define $a_{n=1}^* = 1-\sum_{j=1}^n a_j^*$ and $A_{n+1}=\emptyset$ then \[\sum_{j=1}^{n}a_j^*\cdot \1_{A_j}= \sum_{j=1}^{n+1}a_j^*\cdot \1_{A_j}. \]
 Here the sum $\sum_{j=1}^{n+1}a_j^*\cdot \1_{A_j}$ is the convex combination of indicators $\1_{A_j}$, $j=1, \ldots ,n+1$ and, therefore, may be interpreted as the mean value of random indicator $\1_{\mathcal A}$ (or, equivalently, random set ${\mathcal A}$), where ${\mathcal A}$ takes values $A_{j}$ with the probabilities $a_j^*$, $j =1,\ldots ,n+1$. This shows us a very natural approach to the notion of random sets. 
 
\section{Definition of random sets}\label{sec2}
\setcounter{equation}{0}

We can consider a B.A. $\mathcal X$ as a metric space with the distance 
\begin{equation}\label{eq4} 
d(A,B) =d_2(A,B) = \mathcal{L}^{1/2}(A,B) =\bigl(\mu (A\Delta B)\bigr)^{1/2}
\end{equation}
for $A,B \in {\mathcal X}$. Denote by $\go A$ the Borel $\sigma$-algebra of subsets $\mathcal X$. Each element of $\go A$ is a class of elements from $\mathcal X$, say a family $\{A_{\xi}, \xi \in \Xi\}$, where $A_{\xi} \in {\mathcal X}$. As was mentioned above, we may identify each $A_{\xi}$ with corresponding indicator $\1_{A_{\xi}}(x)$. Any probability measure (not necessary strictly positive) on $\go A$ is considered as a distribution of random set. Our first aim is to define a distance between such measures. Let us remind the definition of strongly negative definite kernel \cite{Kl}. 

\begin{df}\label{de1}
Let $\go X$ be an abstract set, and $\{{\go B}, Q\}$ is a $\sigma$-algebra of its subsets with given probability measure $Q$. Suppose that ${\go L}(X,Y)$ be a real function on $\go X^2$ such that ${\go L}(X,Y)={\go L}(Y,X)$. Let $h(x)$ be a function on $\go X$ integrable with respect to $Q$ under condition
\[ \int_{\go X} h(x) dQ(x) = 0. \] 
We shall say that $\go L$ is strongly negative definite kernel if it is negative definite and equality
\[\int_{\go X}\int_{\go X}{\go L}(x,y) h(x)h(y)dQ(x) dQ(y) = 0 \]
implies that $h(x)=0$ $Q$-almost everywhere for any measure $Q$.
\end{df}

Let us come back to our model $\{{\mathcal X},\go A \}$. Denote by $\go P$ the set of all probabilities on $\go A$. Let $\go m$ be a probability on $\go A$. As it was mentioned above, we consider ${\go m} \in {\go P}$ as a distribution of corresponding random set. We would like to define a distance on a space of some characteristics of ${\go m}\in\go P$, which is proposed as a distance between corresponding independent random sets. To this aim introduce a negative definite kernel on the pairs of such characteristics. Namely, define
\begin{eqnarray}\label{eq5}
\mathcal{N}(\go m,\go n)=2 \int_{\mathcal X}\int_{\mathcal X}{\mathcal L}(A,B)d{\go m}(A)d{\go n}(B) -\\ \nonumber
- \int_{\mathcal X}\int_{\mathcal X}{\mathcal L}(A,B)d{\go m}(A)d{\go m}(B) -\int_{\mathcal X}\int_{\mathcal X}{\mathcal L}(A,B)d{\go n}(A)d{\go n}(B).
\end{eqnarray} 
Let us transform expression (\ref{eq5}) using (\ref{eq4}). 
\[\mathcal{N}(\go m,\go n)=2 \int_{\mathcal X}\int_{\mathcal X}\mu(A\Delta B)d{\go m}(A)d{\go n}(B) -\]  
\[-\int_{\mathcal X}\int_{\mathcal X}\mu(A\Delta B)d{\go m}(A)d{\go m}(B) -\int_{\mathcal X}\int_{\mathcal X}\mu(A\Delta B)d{\go n}(A)d{\go n}(B) = \]
\[= \int_{\Omega}\Bigl(2 \int_{\mathcal X}\int_{\mathcal X}\bigl(\1_A(x)+\1_B(x) - 2 \1_A(x)\cdot \1_B(x)\bigr)d{\go m}(A)d{\go n}(B)-\]
\[-\int_{\mathcal X}\int_{\mathcal X}\bigl(\1_A(x)+\1_B(x) - 2 \1_A(x)\cdot \1_B(x)\bigr)d{\go m}(A)d{\go m}(B)-\] \[-\int_{\mathcal X}\int_{\mathcal X}\bigl(\1_A(x)+\1_B(x) - 2 \1_A(x)\cdot \1_B(x)\bigr)d{\go n}(A)d{\go n}(B) \Bigr)dm(x)=\]
\[= \int_{\Omega}\Bigl(f_{\go m}(x)-f_{\go n}(x)\Bigr)^2 dm(x),\]
where 
\[f_{\go m}(x) =\int_{\mathcal X}\1_A(x)d{\go m}(A), \;\; f_{\go n}(x) =\int_{\mathcal X}\1_A(x)d{\go n}(A). \]
Now we see that ${\mathcal N}(\go m,\go n)$ is a negative definite kernel on the set $\go P^2$, which is strongly negative definite kernel on the space $\go F^2$ of pairs $(f_{\go m}(x),f_{\go n}(x))$ for $(\go m, \go n) \in \go P^2$. The set $\go F$ is a set of functions $\{f_{\go m},\; {\go m}\in {\go P}\}$. Finally, we define a distance ${\go N}$ on $\go F$:
\begin{equation}\label{eq6}
{\go N}(\go m^{\prime}, \go n^{\prime}) = \Bigl(\int_{\Omega}\bigl(f_{\go m}(x)-f_{\go n}(x)\bigr)^2 dm(x)\Bigr)^{1/2},
\end{equation}
where ${\go m^{\prime}}$ is stated for $f_{\go m}(x)$. It is clear that the metric space $\{\go F,\go N\}$ is isometric to a subspace of Hilbert space. 

The transformation $\go m \rightarrow \go m^{\prime}$ from $\go P$ to $\go F$ is not one-to-one\footnote{Two random sets $\mathbb{A}$ taking two values $A$ and $\Omega \setminus A$  with equal probabilities $1/2$ and $\mathbb B$ taking values $\Omega$ and $\emptyset$ with the same probabilities have the same image $\go m^{\prime}$.}. However, $\go m^{\prime}$ contains ``essential information" on the measure $\go m$. Therefore, natural first step in the study of random sets distributions consists in investigation of properties of the space $\go F$ .

Together with the distance (2.3) we consider
\begin{eqnarray}\label{eq7}
{\go N}_p(\go m^{\prime}, \go n^{\prime}) = \Bigl(\int_{\Omega}\bigl|f_{\go m}(x)-f_{\go n}(x)\bigr|^p dm(x)\Bigr)^{1/p}, \;\; p\geq 1,\\
\nonumber
{\go N}_{\infty}(\go m^{\prime}, \go n^{\prime}) = \sup_{x \in \Omega}\bigl|f_{\go m}(x)-f_{\go n}(x)\bigr|=\lim_{p \to \infty}{\go N}_p(\go m^{\prime}, \go n^{\prime}). 
\end{eqnarray}

It is clear that ${\go N}^2(\go m^{\prime},\go n^{\prime})={\go N}_2^2(\go m^{\prime},\go n^{\prime})$ is negative definite kernel and, therefore, 
\begin{equation}\label{eq8}
\go K(\go m^{\prime},\go n^{\prime})=\int_{\Omega}f_{\go m}(x)f_{\go n}(x)dm(x)
\end{equation}
is a positive definite kernel on $\go F$. One of operations that may be defined on $\go F$ is multiplication:
\begin{equation}\label{eq9}
{\go m^{\prime}}\circ {\go n^{\prime}} =f_{\go m}(x)f_{\go n}(x).
\end{equation}
For non-random sets this operation corresponds to theirs intersection. With the operation (\ref{eq9}) and the kernel (\ref{eq8}) $\go F$ is a semigroup\footnote{It is easy to verify that ${\go m^{\prime}}\circ {\go n^{\prime}} \in \go F$ for all $\go m^{\prime}, \go n^{\prime} \in \go F$.} with positive definite kernel in sense of \cite{Kl}. 

There are other ways to turn $\go F$ into semigroup with positive definite kernel. For example, we may define new operation in $\go F$ as
\begin{equation}\label{eq10}
\go m^{\prime}*\go n^{\prime} =\bigl(1-f_{\go m}(x)\bigr)\cdot \bigl(1-f_{\go n}(x)\bigr)
\end{equation}
with the kernel
\begin{equation}\label{eq11} 
\go K^*(\go m^{\prime},\go n^{\prime})=\int_{\Omega}\bigl(1-f_{\go m}(x)\bigr)\cdot \bigl(1-f_{\go n}(x)\bigr)dm(x).
\end{equation}

\section{Operation $\circ$ for the case of discrete random sets}\label{sec3}
\setcounter{equation}{0}

As has been mentioned above, the operation $\circ$ between non-random sets corresponds to intersection of corresponding events. For random sets the values of this ``product" $\circ$ consist of the class of corresponding intersections. 

Let us turn to obtaining limit theorems connected to the large number of $\circ$-"multipliers. 

\begin{thm}\label{th1}
Let $\mathbb A$ be a discrete random set taking values $A_1,A_2, \ldots , A_n, \ldots$ with probabilities $p_1,p_2, \ldots ,p_n, \ldots$. Suppose that $p_1>0$ and $A_1 = \bigcap_{j=1}^{\infty}A_j$. Then 
\begin{equation}\label{eq12}
\lim_{n \to \infty} \Bigl(\sum_{k=1}^{\infty}\1_{A_k}(x)p_k\Bigr)^n = \1_{A_1}(x).
\end{equation}
\end{thm}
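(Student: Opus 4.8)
The plan is to work pointwise in $x\in\Omega$ with the single function
\[ f(x)=\sum_{k=1}^{\infty}\1_{A_k}(x)\,p_k, \]
which is exactly $f_{\go m}(x)$ of Section~\ref{sec2} for $\go m$ the distribution of $\mathbb A$. Since the $A_k$ are elements of the B.A., each $\1_{A_k}$ takes values in $\{0,1\}$, and because $\sum_{k=1}^{\infty}p_k=1$ we get $0\le f(x)\le 1$ for every $x$. The statement (\ref{eq12}) then reduces to showing that $f(x)=1$ whenever $\1_{A_1}(x)=1$ and $f(x)<1$ whenever $\1_{A_1}(x)=0$: in the first case $f(x)^n\equiv 1$ and in the second $f(x)^n\to 0$.

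First I would treat the case $\1_{A_1}(x)=1$. The hypothesis $A_1=\bigcap_{j=1}^{\infty}A_j$ gives $A_1\subseteq A_j$, hence $\1_{A_1}\le\1_{A_j}$, for every $j$; so $\1_{A_1}(x)=1$ forces $\1_{A_j}(x)=1$ for all $j$, and therefore $f(x)=\sum_{k=1}^{\infty}p_k=1$. Consequently $\bigl(\sum_{k}\1_{A_k}(x)p_k\bigr)^n=1=\1_{A_1}(x)$ for every $n$, and the limit is trivial on this set.

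Next I would treat the case $\1_{A_1}(x)=0$. Here the point is to obtain a strict bound $f(x)\le 1-p_1<1$, and the key step — the only place the assumption $p_1>0$ is used — is the elementary estimate
\[ 1-f(x)=\sum_{k=1}^{\infty}\bigl(1-\1_{A_k}(x)\bigr)p_k\;\ge\;\bigl(1-\1_{A_1}(x)\bigr)p_1=p_1>0, \]
where one simply discards all nonnegative terms with $k\ge 2$. Thus $0\le f(x)\le 1-p_1<1$, so $f(x)^n\to 0=\1_{A_1}(x)$ as $n\to\infty$.

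Combining the two cases yields (\ref{eq12}) for every $x\in\Omega$ (equivalently $m$-a.e., if one prefers to read the indicators only up to $m$-null sets, since $\1_{\bigcap_j A_j}$ and $\inf_j\1_{A_j}$ agree $m$-a.e.). I do not expect a genuine obstacle: the argument is just the observation that a convex combination of numbers $\1_{A_k}(x)\in\{0,1\}$ equals $1$ exactly when every coordinate carrying positive weight equals $1$, together with $\lambda^n\to 0$ for $\lambda\in[0,1)$. The only subtle points are to invoke $A_1=\bigcap_j A_j$ in the ``$=1$'' case and $p_1>0$ in the strict inequality of the ``$=0$'' case; dropping the latter would allow $f(x)=1$ off $A_1$ and break the conclusion.
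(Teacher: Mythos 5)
Your proof is correct, and it takes a more elementary route than the paper's. The paper obtains the limit by expanding $\bigl(\1_{A_1}p_1+\sum_{j\ge 2}\1_{A_j}p_j\bigr)^n$ with the binomial theorem and using the absorption identity $\1_{A_1}\cdot\1_{A_j}=\1_{A_1}$ (a consequence of $A_1\subseteq A_j$) to collapse every mixed term onto $\1_{A_1}$, arriving at the exact functional identity
\[
\Bigl(\sum_{k=1}^{\infty}\1_{A_k}(x)p_k\Bigr)^n=\bigl(1-(1-p_1)^n\bigr)\1_{A_1}(x)+\Bigl(\sum_{j= 2}^{\infty}\1_{A_j}(x)p_j\Bigr)^n,
\]
after which the last term is killed by the bound $(1-p_1)^n$. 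You instead argue pointwise: on $A_1$ the inclusion $A_1\subseteq A_j$ forces every indicator to equal $1$, so the convex combination is exactly $1$; off $A_1$ you discard all terms but the first to get $f(x)\le 1-p_1<1$. Both proofs rest on the same two ingredients --- $A_1=\bigcap_j A_j$ to handle points of $A_1$, and $p_1>0$ to get strict separation from $1$ elsewhere --- but your dichotomy avoids the combinatorial bookkeeping entirely (the paper's displayed expansion in fact carries misprints such as $p_1^{k-s}$ for $p_1^{n-s}$) and still yields the same geometric rate, since your two cases combine into $\bigl|f(x)^n-\1_{A_1}(x)\bigr|\le(1-p_1)^n$. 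What the paper's identity buys in exchange is the explicit form of the error term as the $n$-th power of the ``tail'' function $\sum_{j\ge2}\1_{A_j}p_j$, which is not needed for the stated conclusion. Your parenthetical remark that the pointwise statement should be read up to $m$-null sets is the right caveat in the Boolean-algebra setting, where elements are equivalence classes modulo negligible sets.
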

\begin{proof} We have
\[\Bigl(\sum_{k=1}^{\infty}\1_{A_k(x)}(x)p_k\Bigr)^n =  \]
\[=\1_{A_1}(x) \Big[p_1^n+\sum_{s=1}^{n-1}p_1^{k-s}\bigl(\sum_{j=2}^{\infty}p_j \bigr)^s {\binom{n}{s}} \Bigr]+\Bigl(\sum_{j=2}^{\infty}\1_{A_j}(x)p_j\Bigr)^n =\]
\[ = \bigl(1-(1-p_1)^n\bigr)\1_{A_1}(x)+\Bigl(\sum_{j=2}^{\infty}\1_{A_j}(x)p_j\Bigr)^n  \stackrel[n\to \infty]{}{\longrightarrow} \1_{A_1}(x).\]
Here we used the properties $A_1 \cap A_j = A_1$ and $\1_{A_1 \cap A_j}(x) =\1_{A_1}(x)\cdot \1_{A_j}(x)$ for all $j=1,2,\ldots$.
\end{proof}

As a particular case we obtain that if $A_1=\emptyset$ then the limit in (\ref{eq12}) is zero.

Let us look at ``classical" analogue of Theorem \ref{th1}. {\it Consider discrete random variable $X$ taking values $x_1, \ldots ,x_s, \ldots$ with positive probabilities $p_1, \ldots, p_s, \ldots$, where $x_1 = \min\{x_1, \ldots ,x_s,\ldots\}$. Suppose that $X_1, \ldots ,X_n$ are independent identically distributed (i.i.d) with $X$ random variables. Denote $X_{1:n}=\min\{X_1, \ldots ,X_n\}$. Then 
\[X_{1:n} \to x_1 \; \; \text{as}\; \; n\to \infty .\]
Here we have convergence in distribution}. To obtain this statement from Theorem \ref{th1} it is sufficient to apply it to sets $A_j = (-\infty,x_j]$.

For the case $A_1 = \emptyset$ the limit in (\ref{eq12}) is zero. It is interesting to study what is the speed of convergence to zero in this case. To understand the statement of the problem more precisely let us consider an example.

\begin{ex}\label{ex1} Let $A$ be a set from $\mathcal E$. Consider random set ${\mathbb A}$ taking two values $A$ and $\bar{A}=\Omega \setminus A$ with probabilities $1/2$ each. Then
\[ \Bigl(\1_A(x)\cdot \frac{1}{2}+\1_{\bar{A}}(x)\cdot \frac{1}{2}\Bigr)^n =\frac{1}{2^{n-1}}\Bigl(\1_A(x)\cdot \frac{1}{2}+\1_{\bar{A}}(x)\cdot \frac{1}{2}\Bigr).\]
Clearly, $A \cap \bar{A} = \emptyset$. Although for $n=1$ the probability of $\emptyset$ is zero (and we cannot apply Theorem (\ref{th1})), we have convergence to zero. However, if we introduce a ``normalization operator" to change the probabilities in $2^{n-1}$ times, we will obtain the same distribution of random set as at the first step. 
\end{ex}

Let $\mathbb A$ be a discrete random set taking value $A_1=\emptyset$ with probability $p_1>0$, and
\[f_{\mathbb A}(x) = \sum_{j=1}^{\infty}\1_{A_j}(x)p_j .\]
Then random set $\mathbb B$ taking values $A_j$, $j=2, \ldots ,j_n, \ldots$ with probabilities $p_j/(1-p_1)$ has function
\begin{equation}\label{eq13}
f_{\mathbb B}(x) = \frac{1}{1-p_1}f_{\mathbb A}(x).
\end{equation}

\begin{df}\label{de2}
Suppose that $\mathbb{A}$ is a discrete random set taking values $A_j$ with probabilities $p_j$, $j=1,2, \ldots $. We call $\mathbb{A}$ stable random set if for any integer $n \geq 2$ there exists positive number $\kappa_n$ such that
\begin{equation}\label{eq14}
f_{\mathbb{A}}^n(x) = \kappa_n f_{\mathbb{A}}(x).
\end{equation}
\end{df}

It is clear that:
\begin{enumerate} 
\item[i.] If $\mathbb{A}$ takes only one value $A$ with probability $1$ then $\mathbb{A}$ is stable random set.
\item[ii.] Random set from Example \ref{ex1} is stable.
\end{enumerate}

\begin{ex}\label{ex2} 
Suppose that a random set $\mathbb{A}$ takes non-empty values $A_1, \ldots , A_k$ with equal probabilities $1/k$. We also suppose that $A_i \cap A_j = \emptyset$ for $i \neq j$. Then $\mathbb{A}$ is stable. Really, we have
\[ \Bigl(\frac{1}{k} \sum_{j=1}^{k}\1_{A_j}(x)\Bigr)^n = \frac{1}{k^n}\sum_{j=1}^{k}\1_{A_j}(x), \]
and (\ref{eq14}) is true with $\kappa_n = k^{n-1}$.
\end{ex}

We can obtain some results on convergence to stable random sets. Theorem \ref{th1} gives sufficient conditions for the convergence to degenerate random sets. Below we propose another limit theorem.

\begin{thm}\label{th2} 
Suppose that $\mathbb{A}$ is a discrete random set and $\mathbb{B}$ is a stable random set with ``normalizing constant" $\kappa_n$. Suppose that 
\begin{equation}\label{eq15}
f_{\mathbb{A}}(x) =p_1 f_{\mathbb{B}}(x)+p_2 h(x),
\end{equation}
where $\kappa_n p_2^{n}/p_1^n \to 0$ as $n \to \infty$ and $f_{\mathbb{B}}(x)\cdot h(x) =0$. Then there exists a sequence $\lambda_n$ of positive constants such that
\begin{equation}\label{eq16}
\lambda_n \Bigl(f_{\mathbb{A}}(x)\Bigr)^n \longrightarrow f_{\mathbb{B}}(x)\;\; \text{as}\;\; n \to \infty . 
\end{equation}
\end{thm}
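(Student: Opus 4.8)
The plan is to reduce the statement to a single pointwise identity for $\bigl(f_{\mathbb A}(x)\bigr)^n$ and then estimate the resulting remainder; everything hinges on the orthogonality relation $f_{\mathbb B}(x)\,h(x)=0$, which holds for every $x\in\Omega$, not just $m$-almost everywhere.

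First I would observe that at each point $x$ at least one of $f_{\mathbb B}(x)$ and $h(x)$ vanishes, so every mixed monomial dies: $\bigl(f_{\mathbb B}(x)\bigr)^{j}\bigl(h(x)\bigr)^{n-j}=0$ whenever $1\le j\le n-1$. Expanding $\bigl(f_{\mathbb A}(x)\bigr)^{n}=\bigl(p_1 f_{\mathbb B}(x)+p_2 h(x)\bigr)^{n}$ by the binomial theorem and discarding those cross terms leaves only the two extreme terms,
\[
\bigl(f_{\mathbb A}(x)\bigr)^{n}=p_1^{\,n}\bigl(f_{\mathbb B}(x)\bigr)^{n}+p_2^{\,n}\bigl(h(x)\bigr)^{n},\qquad x\in\Omega .
\]
This is the same collapse used in the proof of Theorem \ref{th1}, with $f_{\mathbb B}$ now playing the role that $\1_{A_1}$ played there. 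Next I would invoke the stability of $\mathbb B$ (Definition \ref{de2}): for every $n\ge 2$ there is $\kappa_n>0$ with $\bigl(f_{\mathbb B}\bigr)^{n}=\kappa_n f_{\mathbb B}$, hence
\[
\bigl(f_{\mathbb A}(x)\bigr)^{n}=p_1^{\,n}\kappa_n\, f_{\mathbb B}(x)+p_2^{\,n}\bigl(h(x)\bigr)^{n}.
\]
Since $p_1>0$ and $\kappa_n>0$, the constant $\lambda_n:=\bigl(p_1^{\,n}\kappa_n\bigr)^{-1}$ is well defined and positive, and
\[
\lambda_n\bigl(f_{\mathbb A}(x)\bigr)^{n}-f_{\mathbb B}(x)=\frac{p_2^{\,n}}{p_1^{\,n}\kappa_n}\,\bigl(h(x)\bigr)^{n}.
\]
Thus \ref{eq16} is equivalent to showing that this remainder tends to $0$. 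Note that the requirement $\lambda_n p_1^{\,n}\kappa_n\to 1$, forced by the desired limit \ref{eq16}, pins down $\lambda_n$ up to asymptotic equivalence, so the remainder coefficient is essentially forced to be $p_2^{\,n}/(p_1^{\,n}\kappa_n)$.

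The one substantive point is controlling $\dfrac{p_2^{\,n}}{p_1^{\,n}\kappa_n}\bigl(h(x)\bigr)^{n}$. Here I would use that $h$ arises as $f_{\mathbb C}$ for the part of $\mathbb A$ orthogonal to $f_{\mathbb B}$, so $0\le h\le 1$, hence $0\le h^{n}\le 1$ with $h^{n}\downarrow 0$ wherever $h<1$; if $m$ is finite and $h<1$ $m$-a.e., then $\int_\Omega h^{2n}\,dm\to 0$ by dominated convergence, which gives \ref{eq16} in the $L^{2}(\Omega,m)$ sense natural to this paper as soon as the prefactor $p_2^{\,n}/(p_1^{\,n}\kappa_n)$ stays controlled; for a pointwise or uniform conclusion one wants this prefactor to tend to $0$, i.e.\ precisely a decay condition of the type imposed on $\kappa_n p_2^{\,n}/p_1^{\,n}$ in the hypothesis \ref{eq15}. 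The delicate step is therefore not conceptual but bookkeeping: one must match the exact form of the assumed decay of $\kappa_n p_2^{\,n}/p_1^{\,n}$ to the normalization $\lambda_n=(p_1^{\,n}\kappa_n)^{-1}$, since a stray factor of $\kappa_n$ can slip in between the hypothesis and the quantity that actually needs to vanish. Once that compatibility is checked, the argument above delivers \ref{eq16}.
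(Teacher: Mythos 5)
Your argument is the paper's argument: kill the cross terms in $(p_1 f_{\mathbb B}+p_2 h)^n$ using $f_{\mathbb B}(x)h(x)=0$, absorb $f_{\mathbb B}^n$ by stability, normalize the coefficient of $f_{\mathbb B}$ to $1$ by the choice of $\lambda_n$, and observe that the remainder $\lambda_n p_2^n h^n$ vanishes because $h$ is bounded and its prefactor tends to $0$. The one point you leave open --- the ``stray factor of $\kappa_n$'' --- is not a defect of your bookkeeping but an inconsistency in the paper itself. Under Definition \ref{de2} as written ($f_{\mathbb B}^n=\kappa_n f_{\mathbb B}$) your computation is the correct one: $\lambda_n=(p_1^n\kappa_n)^{-1}$ and the remainder coefficient is $p_2^n/(p_1^n\kappa_n)$, which differs from the hypothesized quantity $\kappa_n p_2^n/p_1^n$ by a factor of $\kappa_n^{2}$; with that reading the theorem is in fact false. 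Indeed, take $\mathbb B$ as in Example \ref{ex2} with $k=2$, so that $f_{\mathbb B}^n=2^{1-n}f_{\mathbb B}$ and $\kappa_n=2^{1-n}\to 0$; take $h=\1_C$ with $C$ nonempty and disjoint from $A_1\cup A_2$, and $p_1=p_2=1/2$. Then the hypothesis $\kappa_n p_2^n/p_1^n=2^{1-n}\to 0$ holds, yet matching the coefficient of $\1_{A_1}$ forces $\lambda_n\sim 4^{n}/2$, whence $\lambda_n f_{\mathbb A}^n=\lambda_n 2^{-n}\to\infty$ on $C$ while $f_{\mathbb B}=0$ there, so no sequence $\lambda_n$ works. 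The paper's own proof writes the error coefficient as $\lambda_n p_1^n/\kappa_n-1$, i.e.\ it silently uses the reciprocal convention $f_{\mathbb B}^n=f_{\mathbb B}/\kappa_n$ --- the convention consistent with the value $\kappa_n=k^{n-1}$ asserted in Example \ref{ex2}, but not with equation (\ref{eq14}). Under that convention $\lambda_n=\kappa_n/p_1^n$, the remainder coefficient is exactly $\kappa_n p_2^n/p_1^n$, and the stated hypothesis together with the boundedness of $h$ (which both you and the paper leave implicit) finishes the proof. So: same route as the paper, and your suspicion about the placement of $\kappa_n$ is precisely the point at which the statement and the proof must be reconciled.
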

\begin{proof} Using the fact $f_{\mathbb{B}}(x)\cdot h(x) =0$ it is not difficult to calculate that 
\[ \lambda_n f_{\mathbb{A}}^{n}(x) - f_{\mathbb{B}}(x)=\Bigl(\frac{\lambda_n p_1^n}{\kappa_n}-1\Bigr)f_{\mathbb{B}}(x) +\lambda_n \cdot p_2^n \cdot h^n(x) \]
for any $\lambda_n >0$. Now it is sufficient to choose $\lambda_n$ so that $\lambda_n \cdot p_1^n /\kappa_n \to 1$ from below as $n \to \infty$. The convergence from below is needed to verify that $\lambda_n \cdot p_1^n /\kappa_n f_{\mathbb{B}(x)}$ corresponds to a random set.
\end{proof}

It is possible to use other definition of stability for the case of discrete random sets. Its idea is similar to that of the definition of casual stability given in \cite{KSKT}. 

\begin{df}\label{de3} Let $\mathbb{A}_a$ be a family of discrete random sets ($a>0$ is a parameter), $\p\{\mathbb{A}_a=A_j\}=p_j(a)$, $(j=1,2, \ldots)$. We say $\mathbb{A}_a$ is stable with respect to a family of transformation $a \rightarrow \xi_n(a)$ $n=1,2, \ldots$ if for any positive integer $n$
\begin{equation}\label{eq17}
 f_a^n(x) = f_{\xi_n(a)}(x), \;\; x\in \Omega,
\end{equation}
where $f_a(x) = \sum_{j=1}^{\infty}\1_{A_j}(x)p_j(a)$.
\end{df}

\begin{thm}\label{th3}
Let $\mathbb{A}_a$ be a family of discrete random sets ($a>0$ is a parameter), $\p\{\mathbb{A}_a=A_j\}=p_j(a) =a\cdot (1-a)^{j-1}$, $(j=1,2, \ldots)$, where $A_j \neq A_k$ for $j \neq k$ and
\[A_1 \subset A_2 \subset A_3 \subset \ldots.\]
Denote $\xi_n(a) = 1-(1-a)^n$. Then $\mathbb{A}_a$ is stable with respect to a family of transformation $\xi_n(a)$.  
\end{thm}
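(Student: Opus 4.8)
The plan is to reduce the claimed identity to a one-line pointwise computation, exploiting the nested structure $A_1\subset A_2\subset A_3\subset\cdots$. The key observation is that for $m$-almost every $x\in\Omega$ the sequence $\1_{A_1}(x),\1_{A_2}(x),\ldots$ is non-decreasing with values in $\{0,1\}$ (the Boolean relation $A_i\subset A_j$ being the same as $\1_{A_i}\1_{A_j}=\1_{A_i}$ $m$-a.e.), hence there is a threshold
\[
J(x):=\min\{\,j\ge 1:\ x\in A_j\,\}\in\{1,2,\ldots\}\cup\{+\infty\}
\]
(with $J(x)=+\infty$ meaning $x\notin\bigcup_j A_j$) such that $\1_{A_j}(x)=1$ if and only if $j\ge J(x)$. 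Note that $J(x)$ does not depend on the parameter $a$, and that strict inclusion / distinctness of the $A_j$ plays no role in what follows.

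First I would compute $f_a(x)$ explicitly by summing a geometric tail. Writing $q:=1-a\in(0,1)$, for $m$-a.e. $x$,
\[
f_a(x)=\sum_{j=1}^{\infty}a(1-a)^{j-1}\1_{A_j}(x)=\sum_{j\ge J(x)}a\,q^{\,j-1}=a\,q^{\,J(x)-1}\sum_{i=0}^{\infty}q^{i}=q^{\,J(x)-1},
\]
the series being absolutely convergent for each fixed $x$, and the formula remaining valid in the degenerate case $J(x)=+\infty$ (empty sum, interpreted as $q^{+\infty}=0$). Thus $f_a(x)=(1-a)^{J(x)-1}$ depends on $x$ only through the $a$-independent index $J(x)$.

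Then the identity (\ref{eq17}) is immediate: raising to the $n$-th power,
\[
f_a^{\,n}(x)=(1-a)^{n(J(x)-1)}=\bigl((1-a)^{n}\bigr)^{J(x)-1},
\]
while applying the same computation with the parameter $\xi_n(a)$ in place of $a$ — which is legitimate since $\xi_n(a)=1-(1-a)^n\in(0,1)$, so that $\mathbb{A}_{\xi_n(a)}$ is again a member of the family built on the same sets $A_j$ — gives $f_{\xi_n(a)}(x)=\bigl(1-\xi_n(a)\bigr)^{J(x)-1}$. Since $1-\xi_n(a)=(1-a)^n$ by the definition of $\xi_n$, the two right-hand sides agree, so $f_a^{\,n}(x)=f_{\xi_n(a)}(x)$ for every positive integer $n$ and $m$-a.e. $x\in\Omega$, which is exactly the stability required by Definition \ref{de3}.

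There is no genuine obstacle here; the only points that deserve a careful word are (i) the passage from the Boolean inclusions $A_j\subset A_{j+1}$ to the pointwise monotonicity of $\1_{A_j}(x)$ and the existence of the threshold $J(x)$ $m$-a.e., including the degenerate case $x\notin\bigcup_j A_j$, and (ii) checking that $\xi_n(a)\in(0,1)$ so that the right-hand side of (\ref{eq17}) really is an $f$-function of the given family. Everything else is a geometric-series evaluation.
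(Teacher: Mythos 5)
Your proof is correct and follows essentially the same route as the paper: the paper writes $f_a^n(x)=\sum_{k\ge 0}\bigl(s_k^n(a)-s_{k+1}^n(a)\bigr)\1_{A_{k+1}}(x)$ with the geometric tail $s_k(a)=(1-a)^k$ and checks $s_k(\xi_n(a))=(1-a)^{nk}$, which is exactly your pointwise computation $f_a(x)=(1-a)^{J(x)-1}$ phrased through tail sums instead of the explicit threshold $J(x)$. Your version just makes the telescoping explicit and adds the (harmless but worthwhile) checks that $J(x)$ is well defined and that $\xi_n(a)\in(0,1)$.
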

\begin{proof}
Let
\[f_a(x) = \sum_{j=1}^{\infty}\1_{A_j}(x)p_j(a) \]
for
\[A_1 \subset A_2 \subset A_3 \subset \ldots.\]
It is not difficult to calculate that
\[ f_a^n(x) =\sum_{k=0}^{\infty}\bigl(s_k^n(a)-s_{k+1}^n(a)\bigr)\1_{A_{k+1}}(x), \]
where $s_k(a) = \sum_{j=k+1}^{\infty}p_j(a)$. For the case $p_j(a) = a\cdot (1-a)^{j-1}$ we have
\[ s_k(a) = (1-a)^k, \;\; k=0,1,2, \ldots \]
It is obvious that 
\[ s_k (\xi_n(a)) =(1-a)^{n\cdot k}.\]
Therefore,
\[ f_a^n(x) = f_{\xi_n(a)}(x). \]
\end{proof}

\section{Operation $*$ for the case of discrete random sets}\label{sec4}
\setcounter{equation}{0}

The results for operation $*$ are very similar to that obtained for the case of $\circ$-operation. 

For the case of nonrandom sets the $\circ$-operation corresponds to intersection of sets and, therefore, to the product of corresponding functions $f_{\go{m}}(x)$ and $f_{\go{n}}(x)$ in general case. Operation $*$ corresponds to union of nonrandom sets and, therefore, to the intersection of their complements. This means that the study of $*$-operation is equivalent to that of $\circ$ for complements. For random case we only have to change $f_{\go{m}}(x)$ by $1-f_{\go{m}}(x)$.
We assume that the reader can formulate the corresponding results himself.

\section{Statistical testing}\label{sec5}
\setcounter{equation}{0}

Let us now consider a problem of statistical testing for random sets. Suppose that we have two random samples (that is $n$ i.i.d. random sets) $\mathbb{A}_1, \ldots , \mathbb{A}_n$ and $\mathbb{B}_1, \ldots , \mathbb{B}_n$, $n \geq 2$. We have to test the hypothesis $\go{m}^{\prime} = \go{n}^{\prime}$, where $\go{m}$ and $\go{n}$ are the distributions of $\mathbb{A}$ and $\mathbb{B}$ correspondingly. Testing of this hypothesis is equivalent to that of 
\begin{equation}\label{eq18}
{\go N}^2(\go m^{\prime}, \go n^{\prime}) = \int_{\Omega}\bigl(f_{\go m}(x)-f_{\go n}(x)\bigr)^2 dm(x) = 0,
\end{equation}
which is, in its turn, equivalent to 
\begin{equation}\label{eq19}
\int_{\Omega}\bigl(f^2_{\go m}(x)-f_{\go m}(x)\cdot f_{\go n}(x) \bigr)dm(x) = \int_{\Omega}\bigl(f_{\go m}(x)\cdot f_{\go n}(x) -f^2_{\go n}(x)\bigr)dm(x).
\end{equation}
To construct statistical test we have to replace $f_{\go m}(x)$ and  $f_{\go n}(x)$ by their empirical analogues. For such replacing in (\ref{eq19}) we shall have the sample from one-dimensional distributions. After that we may apply any free-of-distribution two-sample one-dimensional test. For the case of (\ref{eq18}) it is possible to use permutation test. 

Let us consider such procedures in more details. Suppose that the realizations of random sets $\mathbb{A}_j$ and $\mathbb{B}_j$ $(j=1, \ldots, n)$ are the sets $A_j$ and $B_j$ correspondingly. Then, from (\ref{eq5})
\[\mathcal{N}(\go{m}_n,\go{n}_n) = \frac{2}{n^2}\sum_{i=1,j=1}^{n}m(A_i\Delta B_j) -\frac{1}{n^2}\sum_{i=1,j=1}^{n}m(A_i\Delta A_j) - \frac{1}{n^2}\sum_{i=1,j=1}^{n}m(B_i\Delta B_j),  \] 
where $\go{m}_n$ and $\go{n}_n$ are empirical measures of the samples; $A_i$ are the results of observations from $\go{m}$ and $B_j$ are that from $\go{n}$. The test for identical distribution of $\go{m}^{\prime}$ and $\go{n}^{\prime}$ may be produced as a comparison of $\mathcal{N}(\go{m}_n,\go{n}_n)$ with the values of $\mathcal{N}$ statistic for permuted samples.

For applied problems the situation is a little bit more difficult. The realizations $A_j$ and $B_k$ ($j,k =1, \ldots ,n$) are not observable. Usually, statistician has a system of sets $C_1, \ldots , C_s$ such that 
\[ \bigcup_{j=1}^{s} C_j =\Omega,\quad \text{and}\quad C_i\cap C_j =\emptyset \quad\text{for}\quad i\neq j    \] 
and can only observe the fact of intersection of events $A_j$ and $B_k$ with $C_{l}$.\footnote{The system $\1_{C_1}(x), \ldots , \1_{C_s}(x)$ is an orthogonal system.} This means that instead of each $A_j$ statistician has $s$-dimensional vector $\mathbf{a}_j=(a_{j,1}, \ldots, a_{j,s})$, where $a_{j,l} = 1$ if the intersection $A_j \cap C_l \neq \emptyset$ and
$a_{j,l}=0$ in the opposite case. Of course, instead of $B_k$ the statistician has vector
$\mathbf{b}_k$ obtained in similar way. This means that instead of the problem two-sample test constructing for random sets $\mathbb{A}$ and $\mathbb{B}$ the statistician has to construct two-sample test for $s$-dimensional random vectors. This problem had been studied in \cite{Kl} on the basis of $\go{N}$ distance constructed by mean of ``ordinary" negative definite kernels. Therefore, we have essential connection between distances for random sets and metrics for random vectors. Let us note that the mean values 
\[ \bar{\mathbf{a}}_{n}=\frac{1}{n}\sum_{j=1}^{n}\mathbf{a}_j \quad \text{and} \quad \bar{\mathbf{b}}_{n}=\frac{1}{n}\sum_{j=1}^{n}\mathbf{b}_j \]
give consistent statistical estimators of probabilities $m(\mathbb{A} \cap C_l)$ and $m(\mathbb{B} \cap C_l)$ ($l=1, \ldots s$). In other words, these are estimators for the best approximations of $f_{\mathbb{A}}(x)$ and $f_{\mathbb{B}}(x)$ by elements of orthogonal system $\1_{C_1}(x), \ldots , \1_{C_s}(x)$. Such approximations are similar to (\ref{eqA}), but for random sets instead of non-random in (\ref{eqA}). Really, in above described scheme statistician tests not equality of distributions of $\mathbb{A}$ and $\mathbb{B}$ but the identity of their best approximations by elements of orthogonal system $\1_{C_1}(x), \ldots , \1_{C_s}(x)$.

\section{Acknowledgment}
The work was partially supported by Grant GACR 16-03708S.

\end{document}